\newtheorem{theorem}[equation]{Theorem}
\newtheorem{lemma}[equation]{Lemma}
\newtheorem{proposition}[equation]{Proposition}
\newtheorem{corollary}[equation]{Corollary}
\newtheorem*{conjecture}{Conjecture}
\newtheorem*{maintheorem}{Main Theorem}
\theoremstyle{definition}
\newtheorem{definition}[equation]{Definition}
\theoremstyle{remark}
\newtheorem{remark}[equation]{Remark}
\numberwithin{equation}{section}
\DeclareMathOperator{\ch}{ch}
\DeclareMathOperator{\even}{even}
\DeclareMathOperator{\odd}{odd}
\newcommand{\ZZ}{\mathbb{Z}}
\newcommand{\QQ}{\mathbb{Q}}
\newcommand{\CC}{\mathbb{C}}
\newcommand{\gl}{\mathfrak{gl}}
\newcommand{\so}{\mathfrak{so}}
\begin{document}
	\title[Bound on second Betti of HK]{A conjectural bound on the second Betti number for hyper-K\"ahler manifolds}
	
	\author[Y.-J. Kim]{Yoon-Joo Kim}
	\address{Stony Brook University, Department of Mathematics, Stony Brook, NY 11794-3651}
	\email{yoon-joo.kim@stonybrook.edu}
	
	\author[R. Laza]{Radu Laza}
	\address{Stony Brook University, Department of Mathematics, Stony Brook, NY 11794-3651}
	\email{radu.laza@stonybrook.edu}
	
	\thanks{The two authors were partially supported by NSF grant DMS-1802128.}
	\maketitle
	
	\begin{abstract}
		In previous work (\cite{gklr19}), we noted that the known cases of hyper-K\"ahler manifolds satisfy a natural condition on the LLV decomposition of the cohomology; informally, the Verbitsky component is the dominant representation in the LLV decomposition.  Assuming this condition holds for all hyper-K\"ahler manifolds, we obtain an upper bound for the second Betti number  in terms of the dimension.  
	\end{abstract}

\section{Introduction}
A fundamental open question in the theory of compact hyper-K\"ahler manifold is the boundedness question: {\it are there finitely many diffeomorphism types of hyper-K\"ahlers in a given dimension?} In accordance with the Torelli principle, Huybrechts \cite[Thm 4.3]{huy03} proved that there are finitely many diffeomorphism types of hyper-K\"ahler manifolds once the dimension and the (unnormalized) Beauville--Bogomolov lattice $(H^2(X, \ZZ), q_X)$ are fixed. Thus, bounding the hyper-K\"ahler manifolds is equivalent to bounding the second Betti number $b_2 = b_2(X)$, and then the Beauville--Bogomolov form (e.g., the discriminant).  In dimension $2$, a compact hyper-K\"ahler manifold is always a K3 surface, thus $b_2 = 22$. In dimension $4$, Beauville and Guan \cite{guan01} gave a sharp bound $b_2\le 23$ (in fact, Guan showed $3 \le b_2 \le 8$ or $b_2 = 23$). 
For some further partial results on bounding $b_2$ see Remark \ref{rem_saw}. The purpose of this note is to give a conjectural bound on  $b_2(X)$ for an arbitrary compact hyper-K\"ahler manifold $X$ of dimension $2n$. Our bound depends on a natural conjectural condition  satisfied by the Looijenga--Lunts--Verbitsky (LLV) decomposition of the cohomology $H^*(X)$ for hyper-K\"ahler manifolds $X$.

To state our results, let us recall that Verbitsky \cite{ver95} and Looijenga--Lunts \cite{ll97} noted that the cohomology $H^*(X)$ of a hyper-K\"ahler manifold admits a natural action by the Lie algebra $\mathfrak g = \mathfrak{so}(b_2 + 2)$, generalizing the usual hard Lefschetz theorem. As a $\mathfrak g$-module, the cohomology of a hyper-K\"ahler manifold $X$ decomposes as 
\begin{equation} \label{eq:llv_decomp}
	H^* (X) = \sideset{}{_{\mu}} {\bigoplus} V_{\mu}^{\oplus m_{\mu}},
\end{equation}
where $V_{\mu}$ indicates an irreducible $\mathfrak g$-module of highest weight $\mu=(\mu_0, \cdots, \mu_r)$, with  $r= \left \lfloor \frac{b_2(X)}{2} \right \rfloor=\mathrm{rk}\ \mathfrak g-1$. We refer to $\mathfrak g$ as the {\it LLV algebra} of $X$, and to \eqref{eq:llv_decomp} as the {\it LLV decomposition} of $H^*(X)$ (see \cite{gklr19} for further discussion). Motivated by the behavior of the LLV decomposition in the known cases of hyper-K\"ahler manifolds \cite{gklr19}, we have made the following conjecture.

\begin{conjecture}[{\cite{gklr19}}]
	Let $X$ be a compact hyper-K\"ahler manifold of dimension $2n$. Then the weights  $\mu = (\mu_0, \cdots, \mu_r)$ occurring  in the LLV decomposition \eqref{eq:llv_decomp} of $H^*(X)$ satisfy
	\begin{equation} \label{eq:conj}
		\mu_0 + \cdots + \mu_{r-1} + |\mu_r| \le n. 
	\end{equation}
\end{conjecture}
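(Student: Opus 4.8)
The plan is to read \eqref{eq:conj} as an upper bound on the top eigenvalue of a distinguished Cartan element of $\mathfrak g$ acting on $H^*(X)$, and then to try to realize that element by genuine Lefschetz-type operators so that the bound follows from the concentration of cohomology in degrees $[0,4n]$. Concretely, I would fix an orthogonal splitting of the standard representation $\tilde H = \QQ\alpha \oplus H^2(X,\QQ)\oplus \QQ\beta$ (the Beauville--Bogomolov space extended by a hyperbolic plane) into $r+1$ hyperbolic planes $P_0 \perp \cdots \perp P_r$, with $P_0 = \langle \alpha, \beta\rangle$. Each $P_i$ carries a commuting $\mathfrak{sl}_2$-triple with semisimple element $h_i$, and the $h_i$ span a Cartan subalgebra; on the highest-weight line of $V_\mu$ they act (after normalization) by $2\mu_0, \ldots, 2\mu_r$. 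The element $h_0$ is the Lefschetz grading of a K\"ahler class $\omega$, acting on $H^k(X)$ by $(k-2n)\,\mathrm{id}$; since $H^*(X)$ is concentrated in degrees $[0,4n]$ its $h_0$-eigenvalues lie in $[-2n,2n]$, which already forces $\mu_0 \le n$. This partial bound explains why the Verbitsky component $V_{(n,0,\dots,0)}$, which spreads across every degree from $0$ to $4n$, sits exactly on the boundary of the conjectural region, with equality expected there.

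The content of the conjecture lies in the remaining coordinates. Writing $\eta = h_0 + h_1 + \cdots + h_r$, the value of the highest weight of $V_\mu$ on $\eta$ is $2(\mu_0 + \cdots + \mu_{r-1} + |\mu_r|)$ (the absolute value reflecting the sign ambiguity of the last coordinate in the type $D$ case), and one checks that this is the largest $\eta$-eigenvalue on $V_\mu$ because $\eta$ pairs nonnegatively with every positive root. Thus \eqref{eq:conj} is equivalent to the statement that the largest eigenvalue of $\eta$ on $H^*(X)$ does not exceed $2n$. I would therefore try to express $\eta$, or a positive rescaling of it, as a nonnegative combination of geometric gradings --- the additional Lefschetz operators supplied by the hyper-K\"ahler structure (Verbitsky's action of the K\"ahler classes of the complex structures $I$, $J$, $K$) together with the Weil gradings $p-q$ of the Hodge structures on the $H^k(X)$ --- each of which is confined by the ranges $0 \le p,q \le 2n$. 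Were every summand of $\eta$ of this type, the same degree confinement that controls $h_0$ would close the argument.

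The main obstacle is precisely that the gradings $h_1, \ldots, h_r$ are not geometric: they come from the orthogonal complement of the K\"ahler plane inside $H^2(X)$ and carry no interpretation as cup-product operators, so there is no evident reason their contribution to $\eta$ is controlled by cohomological degree. In each known deformation class one sidesteps this by computing the LLV decomposition explicitly and verifying \eqref{eq:conj} by hand, finding every non-Verbitsky component strictly inside the region; but a uniform structural bound on the top eigenvalue of $\eta$, equivalently a cohomological meaning for the full sum $\mu_0 + \cdots + |\mu_r|$, is exactly what is missing, which is why the statement remains a conjecture. The most promising route I see is to extract further vanishing from the polarized Hodge structures --- exploiting $h^{2,0}(X)=1$, the simpleness of $X$, and the Hodge--Riemann relations --- in the hope of forcing $\mu_1, \ldots, \mu_r$ to be small; whether such input can be pushed to the sharp constant $n$ is the crux of the problem.
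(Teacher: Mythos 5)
Before anything else, note what you were asked to prove: this statement is the paper's \emph{Conjecture}, and the paper itself contains no proof of \eqref{eq:conj}. The authors offer only evidence --- verification in all known deformation classes, sharpness on the Verbitsky component, and ``partial arguments of motivic nature'' deferred to future work --- and their Main Theorem \emph{assumes} \eqref{eq:conj} to deduce the bound on $b_2$. So there is no proof in the paper to compare yours against, and your proposal, which explicitly acknowledges that it does not close the argument, is in the same position as the paper. Within that limitation, your partial observations are essentially correct: the bound $\mu_0 \le n$ does follow from the degree operator $h=\varepsilon_0^{\vee}$ having spectrum in $[-2n,2n]$ on $H^*(X)$ (with the paper's normalization $\langle \varepsilon_0^{\vee},\theta\rangle = 2\theta_0$), and the reformulation of \eqref{eq:conj} as a bound on the top eigenvalue of $\eta = h_0+\cdots+h_r$ is the right way to think about it. One technical slip: in the type $D$ case ($b_2$ even) with $\mu_r<0$, the maximum of $\eta$ over the weight polytope of $V_\mu$ is attained at the highest weight and equals $2(\mu_0+\cdots+\mu_{r-1}-|\mu_r|)$, not $2(\mu_0+\cdots+\mu_{r-1}+|\mu_r|)$; so for such $\mu$ the conjecture is strictly stronger than any eigenvalue bound on $\eta$ for that single irreducible, and you would have to treat $V_\mu$ together with its mirror $V_{\mu'}$, exactly the sign issue the paper handles at the end of its proof of \Cref{thm:generalized_weyl}.

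The genuinely missing idea is the one you yourself name: the Cartan elements $h_1,\dots,h_r$ attached to hyperbolic planes in the orthogonal complement of the Mukai plane are not gradings induced by cup product with any cohomology class, so nothing confines their spectrum the way cohomological degree confines $h_0$. The hyper-K\"ahler rotation structure (Verbitsky's action of $I$, $J$, $K$, together with the Weil operator) only produces a rank-two slice of the Cartan subalgebra, so for $r\ge 2$ most of $\eta$ has no geometric interpretation, and Hodge--Riemann positivity on $H^2$ alone cannot reach it. Your proposal is therefore a correct diagnosis of why the statement is open, not a proof of it; to make progress one would need either the geometric realization of the remaining Cartan directions that you ask for, or an argument of a different nature (e.g.\ the motivic/standard-conjectures route the authors allude to), neither of which is currently available.
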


The conjecture holds for all currently known examples of compact hyper-K\"ahler manifolds (cf. \cite[\S 1]{gklr19}). Furthermore, the equality in \eqref{eq:conj} holds for {\it the Verbitsky component}, an irreducible $\mathfrak g$-submodule with highest weight $\mu = (n,0,\cdots,0)$ that is always present in $H^*(X)$. This shows \eqref{eq:conj} is sharp. Beyond the evidence given by the validity of \eqref{eq:conj} in the known cases, we have some partial arguments of motivic nature (and depending on standard conjectures) showing that at least \eqref{eq:conj} is plausible. This will be discussed elsewhere.

The purpose of this note is to show that conjecture \eqref{eq:conj} implies a general bound on $b_2(X)$.

\begin{maintheorem}
	Let $X$ be a compact hyper-K\"ahler manifold of dimension $2n$. If the condition \eqref{eq:conj} holds for $X$, then
	\begin{equation} \label{bound_b2}
		b_2(X) \le \begin{cases}
		\frac{21 + \sqrt{96n + 433}}{2} & \mbox{if } H^*_{\odd} (X) = 0 \\
		2k + 1 & \mbox{if } H^k (X) \neq 0 \mbox{ for some odd } k
		\end{cases} .
	\end{equation}
\end{maintheorem}

\begin{remark}
	A slightly weaker version of \eqref{bound_b2} is 
	\begin{equation*}
		b_2(X) \le \max \left\{ \tfrac{21 + \sqrt{96n + 433}}{2}, 4n-1 \right\},
	\end{equation*}
	which reads explicitly 
	\begin{table}[h]
	\renewcommand{\arraystretch}{1.3}
		\begin{tabular}{|c||c|c|c|c|c|c|c|c|}
			\hline 
			$n$ & $1$ & $2$ & $3$ & $4$ & $5$ & $6$ & $7$ & $\ge 8$ \\ 
			\hline 
			$b_2(X) \le$ & $22$ & $23$ & $23$ & $24$ & $25$ & $26$ & $27$ & $4n-1$ \\ 
			\hline 
		\end{tabular}\ \ .
	\end{table}
	
	\noindent In low dimensions, our bounds agree with the known results and seem fairly sharp. For instance, we know $\mathrm{K3}^{[n]}$ type hyper-K\"ahler manifolds have $b_2 = 23$ and $\mathrm{OG10}$ manifolds have $b_2 = 24$. These examples almost reach our maximum bound of $b_2$ for low dimensions. Similarly, $\mathrm{Kum}_n$ type hyper-K\"ahler manifolds have $H^3 (X) \neq 0$ and $b_2 = 7$, showing also that the second inequality in \eqref{bound_b2} is sharp.
\end{remark}

\begin{remark} \label{rem_saw}
	Sawon \cite{saw15} and Kurnosov \cite{kur15} have previously obtained the same bounds for $3 \le n \le 5$, and also predicted the general formula \eqref{bound_b2} when $H^*_{\odd} (X) = 0$. However, their results were based on the assumption that an irreducible module $V_{\mu}$ is determined by the shape of its Hodge diamond. In general, the shape of the Hodge diamond of $V_{\mu}$ is controlled only by  the first two coefficients $\mu_0, \mu_1$ (see \cite[\S2.2]{gklr19}).
\end{remark}

A few words about the proof of our conjectural bound. First, in \cite[\S 1]{gklr19}, we have already obtained that the condition \eqref{eq:conj} has consequences on the odd cohomology (specifically, if $b_2 \ge 4n$ then there should be no odd cohomology). A slight generalization of the argument in loc. cit. then gives the second inequality in \eqref{bound_b2}. The main content of this note is the control of the even cohomology under the assumption \eqref{eq:conj}. Essentially, our argument is a representation theoretic refinement of Beauville's argument that $b_2 \le 23$ for hyper-K\"ahler fourfolds. Namely, the starting point is Salamon's relation  \cite{sal96}, a linear relation satisfied by the Betti numbers of hyper-K\"ahler manifolds. Inspired by the shape of it, we define a numerical function $s(W)$ for a $\mathfrak g$-module $W$ and verify its basic properties, most importantly $s ( W_1 \oplus W_2 ) \le \max \{ s(W_1), s(W_2) \}$. In this setting, Salamon's relation reads $s(H^*(X)) = \frac{n}{3}$. Now the punchline is an explicit formula for $s(V_{\mu})$ for irreducible $\mathfrak g$-modules $V_{\mu}$ (\Cref{thm:generalized_weyl}), which is obtained by applying the Weyl character formula. Combining it with \eqref{eq:conj}, we conclude
\[ \frac{n}{3} = s(H^*(X)) \le s(V_{(n,0,\cdots,0)}) = \frac{8n (b_2+n)}{(b_2+1) (b_2+2)} ,\]
which in turn gives the first inequality in \eqref{bound_b2}.

\section{Cohomology of compact hyper-K\"ahler manifolds}
\label{sec:backgrounds}
We briefly review some relevant results on the cohomology of hyper-K\"ahler manifolds.  Let $X$ be a compact hyper-K\"ahler manifold of dimension $2n$ and $H^* (X) = H^* (X, \CC)$. Let $\mathfrak g \subset \gl (H^* (X))$ be the Lie algebra generated by all the Lefschetz and dual Lefschetz operators associated to elements in $H^2 (X)$ (cf. \cite{ver95}, \cite{ll97}). We call this the Looijenga--Lunts--Verbitsky (LLV) algebra of $X$. Let
\[ (V, q) = (H^2 (X), q_{X}) \oplus U \]
be the Mukai completion of $H^2 (X)$ equipped with the Beauville--Bogomolov form, and set $r = \left \lfloor \frac{b_2(X)}{2} \right \rfloor$. Then $\mathfrak g$ is isomorphic to the special orthogonal Lie algebra $\so (V, q) \cong \so (b_2 + 2, \CC)$ of rank $r+1$. The cohomology $H^*(X)$ of a hyper-K\"ahler manifold $X$ admits a $\mathfrak g$-module structure, generalizing the hard Lefschetz theorem. We refer to the $\mathfrak g$-module irreducible decomposition \eqref{eq:llv_decomp} of $H^* (X)$ as the {\it LLV decomposition} of the cohomology (see \cite[\S3]{gklr19} for some examples).

Fix a Cartan and a Borel subalgebra of $\mathfrak g$. Representation theory of $\so (V, q)$ depends on the parity of $\dim V = b_2 + 2$. If $b_2 = 2r$ is even, then we can fix a suitable basis $\varepsilon_0, \cdots, \varepsilon_r$ of the dual Cartan subalgebra such that the $2r+2$ associated weights of the standard module $V$ are $\pm \varepsilon_0, \cdots, \pm \varepsilon_r$. Similarly, if $b_2 = 2r+1$ is odd, then we can choose $\varepsilon_i$ such that $V$ has the $2r+3$ associated weights $0, \pm \varepsilon_0, \cdots, \pm \varepsilon_r$. (Note that the index of the basis starts from $0$.) Any dominant integral weight $\mu$ can be expressed in this basis as
\[ \mu = (\mu_0, \cdots, \mu_r) = \sum_{i=0}^r \mu_i \varepsilon_i .\]
Here if $b_2 = 2r$ is even, then $\mu_i$ satisfy the condition $\mu_0 \ge \cdots \ge \mu_{r-1} \ge |\mu_r| \ge 0$ and $\mu_i$ are either all integers or all half-integers. If $b_2 = 2r+1$ is odd, then $\mu_0 \ge \cdots \ge \mu_r \ge 0$ and $\mu_i$ are again either all integers or all half-integers. It will be important whether $\mu_i$ are integers or half-integers, so we define:
\begin{definition}
	Let $\mu = (\mu_0, \cdots, \mu_r)$ be a dominant integral weight of $\mathfrak g = \so (V, q)$.
	\begin{enumerate}
		\item If all $\mu_i$ are integers, we say $\mu$ is \emph{even}. If all $\mu_i$ are half-integers (i.e., $\mu_i \in \frac{1}{2} \ZZ \setminus \ZZ$), we say $\mu$ is \emph{odd}.
		\item An irreducible $\mathfrak g$-module $V_{\mu}$ of highest weight $\mu$ is called even (resp. odd) if $\mu$ is even (resp. odd).
		\item A $\mathfrak g$-module $W$ is called even (resp. odd) if all of its irreducible components $V_{\mu}$ are even (resp. odd).
	\end{enumerate}
\end{definition}

\noindent By \cite[Prop 2.35]{gklr19}, the even (odd) cohomology $H^*_{\even} (X)$ is always an even (resp. odd) $\mathfrak g$-module.

When $\mu$ is even and has multiple $0$'s at the end of its coordinate expression $(\mu_0, \cdots, \mu_r)$, we will simply omit the last $0$'s. For example, the notation $(m) = (m, 0, \cdots, 0)$ refers to the integral weight $m \varepsilon_0$. In geometric situation for hyper-K\"ahler manifolds, the subalgebra of $H^* (X)$ generated by $H^2 (X)$ becomes an irreducible $\mathfrak g$-submodule of $H^* (X)$, which we call {\it the Verbitsky component} of $H^* (X)$. As a $\mathfrak g$-module, Verbitsky component is isomorphic to $V_{(n)}$ and it always occurs with multiplicity $1$ in the LLV decomposition.

Let $h$ be the degree operator on $H^* (X)$, the operator acting as multiplication by $k$ on $H^{2n+k} (X)$. For a suitable choice of a Cartan and a Borel subalgebra, we can assume $h = \varepsilon_0^{\vee}$ (e.g., \cite[(2.28)]{gklr19}). By definition, the degree decomposition of the cohomology
\[ H^* (X) = \bigoplus_{k=-2n}^{2n} H^{2n + k} (X) \]
is the $h$-eigenspace decomposition. In general, an arbitrary $\mathfrak g$-module admits the $h$-eigenspace decomposition
\begin{equation} \label{eq:h_eigendecomp}
	W = \bigoplus_{k \in \ZZ} W_k ,
\end{equation}
where $W_k$ denotes the eigenspace of $W$ with eigenvalue $k$. The eigenvalues $k$ are always integers by the following reason. Let $W(\theta)$ be the weight subspace of $W$ associated to a weight $\theta = \theta_0 \varepsilon_0 + \cdots + \theta_r \varepsilon_r$. Then $h = \varepsilon_0^{\vee}$ acts on $W(\theta)$ by $\langle \varepsilon_0^{\vee}, \theta \rangle = 2 \theta_0$, which is an integer since $\theta_0 \in \frac{1}{2} \ZZ$ for any weight $\theta$.

Consider the LLV decomposition of the cohomology
\begin{equation}
	H^* (X) = \sideset{}{_{\mu}} {\bigoplus}  V_{\mu}^{\oplus m_{\mu}} . \tag{\ref{eq:llv_decomp} (restated)}
\end{equation}
If $V_{\mu}$ is contained in the odd cohomology, then $\mu$ is odd by the above discussion. Hence all $\mu_i$ are half-integers, and in particular we have $\mu_i \ge \frac{1}{2}$ (possibly except for the last $|\mu_r| \ge \frac{1}{2}$, if $b_2$ is even). If we specifically assume $H^k (X) \neq 0$ for odd $k < 2n$, then there exists at least one irreducible component $V_{\mu}$ with $(V_{\mu})_{k-2n} \neq 0$. This means $h = \varepsilon_0^{\vee}$ acts on some part of $V_{\mu}$ by $k-2n$, so $V_{\mu}$ has an associated weight $\theta = \theta_0 \varepsilon_0 + \cdots + \theta_r \varepsilon_r$ with $\theta_0 = \frac{k}{2} - n$. This forces $\mu_0 \ge n - \frac{k}{2}$. Summarizing, we have
\[ \mu_0 \ge n - \frac{k}{2}, \qquad \mu_1, \cdots, \mu_{r-1}, |\mu_r| \ge \frac{1}{2} ,\]
which gives the following.

\begin{corollary} \label{cor:odd_weight_bound}
	Let $X$ be a compact hyper-K\"ahler manifold of dimension $2n$. Assume $H^k (X) \neq 0$ for some odd integer $k < 2n$. Then there exists a weight $\mu$ in \eqref{eq:llv_decomp} with $\mu_0 + \cdots + \mu_{r-1} + |\mu_r| \ge n - \frac{k}{2} + \frac{r}{2}$. \qed
\end{corollary}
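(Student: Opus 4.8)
The plan is to combine two ingredients already set up in the discussion preceding the statement: the parity constraint forcing the weight coordinates of any odd summand to be half-integers (hence bounded below in absolute value by $\tfrac{1}{2}$), and the dictionary between the $h$-grading and the zeroth weight coordinate via $h = \varepsilon_0^{\vee}$.

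First I would select an irreducible summand $V_{\mu}$ of $H^*(X)$ witnessing the hypothesis $H^k(X) \neq 0$, namely one with $(V_{\mu})_{k-2n} \neq 0$. Since $k$ is odd, this summand lies in $H^*_{\odd}(X)$, so by \cite[Prop 2.35]{gklr19} the weight $\mu$ is odd and all of its coordinates $\mu_0, \ldots, \mu_r$ are half-integers. In particular $\mu_i \ge \tfrac{1}{2}$ for $i = 1, \ldots, r-1$ and $|\mu_r| \ge \tfrac{1}{2}$, giving $r$ coordinates each contributing at least $\tfrac{1}{2}$ to the target sum.

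Next I would extract the lower bound on $\mu_0$. The condition $(V_{\mu})_{k-2n} \neq 0$ means $h = \varepsilon_0^{\vee}$ acts with eigenvalue $k-2n$ on some vector, so $V_{\mu}$ contains a weight $\theta$ with $\langle \varepsilon_0^{\vee}, \theta \rangle = 2\theta_0 = k-2n$, i.e.\ $\theta_0 = \tfrac{k}{2} - n$. Because $\mu$ is the highest weight and every weight of $V_{\mu}$ lies in the convex hull of the Weyl orbit $W\mu$ (which, for $\mathfrak{so}(V,q)$ in either parity, permutes the coordinates with sign changes), every weight $\theta$ satisfies $|\theta_0| \le \mu_0$. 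Since $k < 2n$ forces $\theta_0 < 0$, this yields $\mu_0 \ge n - \tfrac{k}{2}$. Adding the two estimates gives
\[
\mu_0 + \mu_1 + \cdots + \mu_{r-1} + |\mu_r| \ge \left(n - \tfrac{k}{2}\right) + \tfrac{r}{2},
\]
as claimed.

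The only mildly delicate point is the inequality $|\theta_0| \le \mu_0$: one must invoke that the first coordinate of any weight occurring in an irreducible orthogonal module is dominated by $\mu_0$ (the largest, hence maximal, coordinate of the highest weight), and then track the sign coming from $k < 2n$ to convert $\theta_0 = \tfrac{k}{2} - n$ into $\mu_0 \ge n - \tfrac{k}{2}$. Everything else is a direct transcription of the parity and grading conventions recalled above, so I expect no genuine obstacle beyond bookkeeping the half-integrality.
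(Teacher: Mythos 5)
Your proof is correct and follows essentially the same route as the paper: pick an irreducible summand $V_{\mu}$ with $(V_{\mu})_{k-2n} \neq 0$, use the parity result to get half-integrality (hence $\mu_1, \ldots, \mu_{r-1}, |\mu_r| \ge \tfrac{1}{2}$), and use $h = \varepsilon_0^{\vee}$ to produce a weight $\theta$ with $\theta_0 = \tfrac{k}{2} - n$, forcing $\mu_0 \ge n - \tfrac{k}{2}$. The only difference is that you spell out the justification of $|\theta_0| \le \mu_0$ via the convex hull of the Weyl orbit, a step the paper leaves implicit.
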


Finally, let us recall Salamon's relation. Let $b_k = b_k(X)$ be the $k$-th Betti number of $X$. Salamon \cite{sal96} proved that the Betti numbers of hyper-K\"ahler manifolds $X$ satisfy a linear relation:
\[ \sum_{k=1}^{2n} (-1)^k (6k^2 - 2n) b_{2n + k} = n b_{2n} .\]
One can manipulate the identity into the following form
\begin{equation} \label{eq:salamon_old}
	\sum_{k=-2n}^{2n} (-1)^k k^2 b_{2n + k} = \frac{n}{3} e(X), 
\end{equation}
where $e(X) = \sum_{k=-2n}^{2n} (-1)^k b_{2n+k}$ is the topological Euler characteristic of $X$.

\section{Proof of Main Theorem}
Inspired by Salamon's relation \eqref{eq:salamon_old}, we define a constant $s(W)$ associated to an arbitrary $\mathfrak g$-module $W$.

\begin{definition}
	Let $W$ be a $\mathfrak g$-module and $W = \bigoplus_k W_k$ its $h$-eigenspace decomposition in \eqref{eq:h_eigendecomp}. Assume $\sum_k (-1)^k \dim W_k\neq 0$ (N.B. This is automatic if $W$ is either even or odd). Then we define a constant $s(W)\in \QQ$ associated to $W$ by
	\[ s(W) = \frac{\sum_{k \in \ZZ} (-1)^k k^2 \dim W_k}{\sum_{k \in \ZZ} (-1)^k \dim W_k} .\]
	In particular, if $e(X) \neq 0$, Salamon's relation \eqref{eq:salamon_old} reads
	\begin{equation} \label{eq:salamon_new}
		s(H^*(X)) = \frac{n}{3} .
	\end{equation}
\end{definition}

The case of odd cohomology will be easily handled by Corollary \ref{cor:odd_weight_bound}. Thus, we can focus on the case of vanishing odd cohomology (in particular, $e(X) \neq 0$). The main content then is to bound the value $s(H^* (X))$ in terms of $b_2$ and the LLV decomposition \eqref{eq:llv_decomp}. Once this is done, assuming our conjecture \eqref{eq:conj}, Salamon's relation \eqref{eq:salamon_new} leads to the desired inequality \eqref{bound_b2} between $b_2$ and $n$. Let us start from some straightforward properties of the constant $s(W)$.

\begin{proposition} \label{prop:s_prop1}
	Let $\{ W_i \}_{i \in I}$ be a finite set of $\mathfrak g$-modules with well-defined $s(W_i)$.
	\begin{enumerate}
		\item If all $W_i$ are simultaneously even or odd, then $\min_i \{ s(W_i) \} \le s \left( \bigoplus_i W_i \right) \le \max_i \{ s(W_i) \}$.
		\item $s \left( \bigotimes_i W_i \right) = \sum_i s(W_i)$.
	\end{enumerate}
\end{proposition}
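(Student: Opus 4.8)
The plan is to reduce both statements to elementary manipulations of two numerical invariants attached to the $h$-eigenspace decomposition. For a $\mathfrak g$-module $W=\bigoplus_k W_k$, write $P(W)=\sum_k(-1)^k\dim W_k$ for the denominator and $Q(W)=\sum_k(-1)^k k^2\dim W_k$ for the numerator, so that $s(W)=Q(W)/P(W)$. Both are additive under direct sums, since the $h$-eigenspaces of $\bigoplus_i W_i$ are the direct sums of those of the $W_i$. The only arithmetic input needed for (i) is the sign of $P$: if $W$ is even then every $h$-eigenvalue $k$ is even, so $(-1)^k=1$ and $P(W)=\dim W>0$; if $W$ is odd then every $k$ is odd, so $(-1)^k=-1$ and $P(W)=-\dim W<0$.

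For (i), assume first all $W_i$ are even, so $P(W_i)>0$. Writing $Q(W_i)=s(W_i)P(W_i)$,
\[ s\Big(\bigoplus_i W_i\Big)=\frac{\sum_i Q(W_i)}{\sum_i P(W_i)}=\frac{\sum_i s(W_i)\,P(W_i)}{\sum_i P(W_i)} \]
exhibits the left side as a weighted average of the numbers $s(W_i)$ with strictly positive weights $P(W_i)$; hence it lies between $\min_i s(W_i)$ and $\max_i s(W_i)$. If instead all $W_i$ are odd, the weights $P(W_i)$ are all negative, but factoring $-1$ out of numerator and denominator replaces them by the positive weights $-P(W_i)$, and the same weighted-average conclusion holds. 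In either case $\sum_i P(W_i)\neq 0$, so $s(\bigoplus_i W_i)$ is well defined.

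For (ii) I would package the data in the generating function $\chi_W(t)=\sum_k(-1)^k\dim W_k\,t^k$. Because $h$ acts on a tensor product through $h\otimes 1+1\otimes h$, its eigenvalue on $W_k\otimes W'_l$ is $k+l$, whence $\dim(W\otimes W')_m=\sum_{k+l=m}\dim W_k\dim W'_l$ and therefore $\chi_{W\otimes W'}=\chi_W\cdot\chi_{W'}$; by induction $\chi_{\bigotimes_i W_i}=\prod_i\chi_{W_i}$. Applying the degree operator $D=t\tfrac{d}{dt}$ and evaluating at $t=1$ recovers the three moments $P(W)=\chi_W(1)$, $L(W):=\sum_k(-1)^k k\dim W_k=(D\chi_W)(1)$, and $Q(W)=(D^2\chi_W)(1)$. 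A direct computation of $D^2\log\chi_W$ then gives $(D^2\log\chi_W)(1)=s(W)-\big(L(W)/P(W)\big)^2$.

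The crux — and the one genuinely nonformal step — is the vanishing of the first moment $L(W)=0$ for every finite-dimensional $\mathfrak g$-module $W$. This follows from the symmetry $\dim W_k=\dim W_{-k}$ of the $h$-eigenspaces: pairing the index $k$ with $-k$, the contributions $(-1)^k k\dim W_k$ and $(-1)^{-k}(-k)\dim W_{-k}$ cancel, and the $k=0$ term vanishes. The symmetry itself is the abstract hard Lefschetz relation: $h=\varepsilon_0^\vee$ is the semisimple element of the Lefschetz $\mathfrak{sl}_2$-triple inside $\mathfrak g$, so restricting $W$ to that $\mathfrak{sl}_2$ makes its $h$-weights symmetric about $0$; equivalently, a Weyl element sending $\varepsilon_0\mapsto-\varepsilon_0$ identifies the weight multiplicities in degrees $k$ and $-k$. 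Granting $L(W)=0$, the displayed formula reads $s(W)=(D^2\log\chi_W)(1)$, and since $\log\chi_{\bigotimes_i W_i}=\sum_i\log\chi_{W_i}$ the functional $(D^2\log(-))(1)$ is additive, giving $s(\bigotimes_i W_i)=\sum_i s(W_i)$. One may avoid logarithms entirely by expanding the two-factor case directly, obtaining $s(W_1\otimes W_2)=s(W_1)+s(W_2)+2\,\tfrac{L(W_1)L(W_2)}{P(W_1)P(W_2)}$, observing that the cross term dies because $L=0$, and then inducting on the number of factors.
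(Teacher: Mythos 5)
Your proof is correct and takes essentially the same route as the paper: item (i) is the same positive-weighted-average argument (the paper does two even summands and says the odd case is similar, while you treat the general family and the odd signs explicitly), and item (ii) rests on the same two facts --- multiplicativity of the signed dimension data under tensor product and vanishing of the first moment $\sum_k (-1)^k k \dim W_k$ by Weyl symmetry, which kills the cross term. Your generating-function packaging $\chi_W(t)$ is precisely the series $S(W)$ that the paper itself introduces in the remark immediately following this proposition, so even that refinement matches the paper's viewpoint.
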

\begin{proof}
	It is enough to prove the proposition for two $\mathfrak g$-modules $W$ and $W'$. Assume without loss of generality $s(W) \le s(W')$, and let us consider the case when $W$ and $W'$ are even (the odd case is similar). In this case, all eigenvalues $k$ of $W$ are even, so we have
	\[ \sum_k k^2 \dim W_k = s(W) \dim W, \qquad \sum_k k^2 \dim W'_k = s(W') \dim W' .\]
	Adding the two equalities and using $s(W) \le s(W')$ gives us the first item. 
	
	For the second item, we compute
	\begin{align*}
		\sum_k (-1)^k k^2 \dim (W \otimes W')_k &= \sum_k (-1)^k k^2 \left( \sum_{i+j = k} \dim W_i \dim W'_j \right) \\
		&= \sum_{i, j} (-1)^{i+j} (i^2 + 2ij + j^2) \dim W_i \dim W'_j \\
		&= \left( \sum_i (-1)^i i^2 \dim W_i \right) e(W') + \left( \sum_j (-1)^j j^2 \dim W'_j \right) e(W) \\
		& \qquad + 2 \left( \sum_i (-1)^i i \dim W_i \right) \left( \sum_j (-1)^j j \dim W'_j \right) .
	\end{align*}
	Here we used the notation $e(W) = \sum_i (-1)^i \dim W_i$ and $e(W') = \sum_j (-1)^j \dim W'_j$ for simplicity. Notice that $\sum_i (-1)^i i \dim W_i = 0$, since by Weyl symmetry we always have $\dim W_i = \dim W_{-i}$. This proves $\sum_k (-1)^k k^2 \dim (W \otimes W')_k = (\sum_i (-1)^i i^2 \dim W_i) e(W') + (\sum_j (-1)^j j^2 \dim W'_j) e(W)$. Dividing both hand sides by $e(W \otimes W') = e(W) e(W')$ gives us the result.
\end{proof}

\begin{remark}
	In fact, we can associate to an arbitrary $\mathfrak g$-module $W$ the following formal power series
	\[ S(W) = \sum_k (-1)^k \dim W_k \cdot \exp (kt) \ \ \in \ \ \QQ[[t]] .\]
	One can easily show 
	\[ S(W \oplus W') = S(W) + S(W') , \quad S(W \otimes W') = S(W)\cdot S(W') ,\]
	so that $S$ defines a \emph{ring homomorphism} from the representation ring $K(\mathfrak g)$ of $\mathfrak g$
	\[ S : K(\mathfrak g) \to \QQ [[t]] .\]
	By Weyl symmetry, we have $\dim W_k = \dim W_{-k}$, giving that all the odd degree terms of $S(W)$ vanish. Thus, we can write
	\[ S(W) = s_0 + s_2 t^2 + s_4 t^4 + \cdots \ \ \in \ \ \QQ[[t]] ,\qquad s_i = \frac{1}{i!} \sum_k (-1)^k k^i \dim W_k .\]
	From this perspective, our constant $s(W)$ is the ratio between the first two coefficients
	\[ s(W)=\frac{2s_2}{s_0} \]
	of the formal power series $S(W)$.
\end{remark}

A more interesting result is the explicit computation of $s(W)$ for irreducible $\mathfrak g$-modules $W = V_{\mu}$. Recall that the Lie algebra $\mathfrak g$ was isomorphic to $\so (b_2 + 2, \CC)$ and $r = \lfloor \frac{b_2}{2} \rfloor$, so that $\mathfrak g$ has rank $r+1$ and a dominant integral weight $\mu$ can be written as a tuple $(\mu_0, \cdots, \mu_r)$.

\begin{theorem} \label{thm:generalized_weyl}
With notations as above,	let $V_{\mu}$ be an irreducible $\mathfrak g$-module of highest weight $\mu$. If $\mu_r \ge 0$, then
	\[ s(V_{\mu}) = 8 \cdot \frac{\left( \sum_{i=0}^r \mu_i \right) b_2 + \left( \sum_{i=0}^r (\mu_i - i)^2 - i^2 \right)}{(b_2+1)(b_2+2)} .\]
	If $b_2$ is even and $\mu_r < 0$, then $s(V_{\mu}) = s(V_{\mu'})$ where $\mu' = (\mu_0, \cdots, \mu_{r-1}, -\mu_r)$.
\end{theorem}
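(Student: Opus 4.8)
The plan is to sidestep a direct character computation and instead express $s(V_\mu)$ through the quadratic Casimir eigenvalue of $V_\mu$; the Weyl character formula enters only implicitly, via the Casimir eigenvalue.

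First I would remove the signs. In the $h$-eigenspace decomposition, $h=\varepsilon_0^\vee$ acts on the weight space $V_\mu(\theta)$ by $2\theta_0$. Every weight $\theta$ of $V_\mu$ lies in $\mu$ plus the root lattice, and the roots of $\so(V,q)$ have integral $\varepsilon$-coordinates, so $2\theta_0\equiv 2\mu_0\pmod 2$ has constant parity across all weights. Hence $(-1)^{2\theta_0}$ is a single global sign that cancels between the numerator and denominator in the definition of $s$, giving
\[ s(V_\mu)=\frac{\sum_\theta (2\theta_0)^2\dim V_\mu(\theta)}{\sum_\theta\dim V_\mu(\theta)}=\frac{4\sum_\theta\theta_0^2\dim V_\mu(\theta)}{\dim V_\mu}. \]
In particular $s(V_\mu)$ is well defined (the denominator is $\pm\dim V_\mu\neq 0$) and equals four times the unsigned second moment of the coordinate $\theta_0$ over the weight multiset.

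The key step reduces this second moment to the Casimir. The Weyl group of $\so(b_2+2)$ contains all permutations of $\varepsilon_0,\dots,\varepsilon_r$ (in both the $B_{r+1}$ and $D_{r+1}$ cases), and the weight multiset of $V_\mu$ is Weyl-invariant, so $\sum_\theta\theta_i^2\dim V_\mu(\theta)$ is independent of $i$; therefore $\sum_\theta\theta_0^2\dim V_\mu(\theta)=\tfrac{1}{r+1}\sum_\theta\langle\theta,\theta\rangle\dim V_\mu(\theta)$. I would then invoke the invariant-form argument: the bilinear form $(X,Y)\mapsto\mathrm{tr}_{V_\mu}(XY)$ on $\mathfrak g$ is symmetric and invariant, and since $\mathfrak g=\so(b_2+2)$ is simple (for $b_2\ge 3$, the relevant range) it is a scalar multiple $\kappa\langle\cdot,\cdot\rangle$ of the standard form. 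Summing over a dual basis gives $\kappa\dim\mathfrak g=\mathrm{tr}_{V_\mu}\Omega=c_\mu\dim V_\mu$, where $c_\mu=\langle\mu+\rho,\mu+\rho\rangle-\langle\rho,\rho\rangle$, while evaluating the same form on an orthonormal Cartan basis $\{H_i\}$ yields $\kappa\dim\mathfrak h=\mathrm{tr}_{V_\mu}\big(\sum_i H_i^2\big)=\sum_\theta\langle\theta,\theta\rangle\dim V_\mu(\theta)$. Combining, $\sum_\theta\theta_0^2\dim V_\mu(\theta)=\tfrac{c_\mu}{\dim\mathfrak g}\dim V_\mu$, and hence $s(V_\mu)=4c_\mu/\dim\mathfrak g$. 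This middle step is the main obstacle: one must check simplicity of $\mathfrak g$ so that the space of invariant forms is one-dimensional, and one must track that the overall scale of the invariant form (which affects $\kappa$, the induced norm on $\mathfrak h^*$, and $c_\mu$ simultaneously) cancels, so that one may simply normalize $\langle\varepsilon_i,\varepsilon_j\rangle=\delta_{ij}$ throughout.

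Finally I would substitute explicit data. Here $\dim\mathfrak g=\binom{b_2+2}{2}=\tfrac{(b_2+1)(b_2+2)}{2}$, and in both types $\rho_i=\tfrac{b_2}{2}-i$, so
\[ c_\mu=\sum_i\mu_i^2+2\sum_i\mu_i\Big(\tfrac{b_2}{2}-i\Big)=b_2\sum_i\mu_i+\sum_i\big((\mu_i-i)^2-i^2\big), \]
and $s(V_\mu)=4c_\mu/\dim\mathfrak g$ is exactly the claimed formula. For the case $b_2$ even and $\mu_r<0$, I note that $\rho_r=\tfrac{b_2}{2}-r=0$ in type $D_{r+1}$, so flipping the sign of $\mu_r$ leaves $c_\mu$, and hence $s(V_\mu)$, unchanged; equivalently $V_\mu$ and $V_{\mu'}$ are exchanged by the outer automorphism $\varepsilon_r\mapsto-\varepsilon_r$, which fixes $h=\varepsilon_0^\vee$ and therefore preserves all $h$-eigenspace dimensions.
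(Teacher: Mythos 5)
Your proof is correct, and it takes a genuinely different route from the paper's. The paper applies the ``projection to the $\rho$-direction'' to the Weyl character formula to get a rational function $f(q)$, proves $s(V_\mu)=\tfrac{6}{b(b+1)(b+2)}(\log f)''(1)$ (\Cref{lem:step1}), and then computes $(\log f)''(1)$ by Taylor-expanding each factor $\log\bigl(\tfrac{q^a-q^{-a}}{q-1}\bigr)$ and summing explicitly over the positive roots, with separate computations for types $\mathrm{B}_{r+1}$ and $\mathrm{D}_{r+1}$ (\Cref{lem:step2}); the hypothesis $\mu_r\ge 0$ is used there to guarantee each exponent $a=2(\mu+\rho,\alpha)$ is a positive integer. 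You instead prove the clean identity $s(V_\mu)=4c_\mu/\dim\mathfrak g$ with $c_\mu=\langle\mu,\mu+2\rho\rangle$ the Casimir eigenvalue: your sign-removal step and the reduction of $\sum_\theta\theta_0^2\dim V_\mu(\theta)$ to $\tfrac{1}{r+1}\sum_\theta\langle\theta,\theta\rangle\dim V_\mu(\theta)$ via the $\mathfrak S_{r+1}$-part of the Weyl group are both sound (the paper exploits the same symmetries inside the proof of \Cref{lem:step1}), the trace-form/index argument $\kappa\dim\mathfrak g=c_\mu\dim V_\mu$, $\kappa\dim\mathfrak h=\sum_\theta\langle\theta,\theta\rangle\dim V_\mu(\theta)$ is valid, and your normalization bookkeeping checks out. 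Your route buys several things: the uniform expression $\rho_i=\tfrac{b_2}{2}-i$ avoids the even/odd case split; no $q$-series manipulation is needed; and the restriction $\mu_r\ge 0$ plays no role in the derivation, so the second assertion of the theorem drops out immediately from $\rho_r=0$ (your outer-automorphism remark, which parallels the paper's argument, is a second correct proof of that part). The cost is that you need the space of invariant symmetric bilinear forms on $\mathfrak g$ to be one-dimensional, i.e.\ simplicity of $\so(b_2+2)$, which fails for $b_2=2$ (type $\mathrm{D}_2$); as you note this is harmless in context, since a compact hyper-K\"ahler manifold always has $b_2\ge 3$ (the classes $\omega_I,\omega_J,\omega_K$ are independent), whereas the paper's character-theoretic proof works without this restriction. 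One could make the acknowledgment of this hypothesis slightly more prominent, but as written the argument is complete for the intended range of $b_2$.
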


We postpone the proof of \Cref{thm:generalized_weyl} to the following section. For now let us conclude the proof of our Main Theorem using this result. First, we note the following consequence of \Cref{thm:generalized_weyl}. Recall from \Cref{sec:backgrounds} that the notation $(m) = (m,0,\cdots,0)$ refers to the integral weight $m \varepsilon_0$.

\begin{corollary} \label{cor:s_prop2}
	\begin{enumerate}
		\item $s(V_{(m)}) = \frac{8m (b_2+m)} {(b_2+1)(b_2+2)}$ for $m \in \ZZ_{\ge 0}$.
		\item If $\mu$ is even, then $s(V_{\mu}) \le s(V_{(m)})$ for $m = \mu_0 + \cdots + \mu_{r-1} + |\mu_r|$.
		\item $s(V_{(m)}) \le s(V_{(n)})$ for $m \le n$.
	\end{enumerate}
\end{corollary}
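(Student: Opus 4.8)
The plan is to prove all three items directly from the explicit formula in \Cref{thm:generalized_weyl}. The strategy is that item (i) is a substitution, item (iii) is an elementary monotonicity check on a single-variable function, and item (ii) is the substantive step where I must compare the value of $s$ at a general even weight $\mu$ with its value at the ``collapsed'' weight $(m)$ where all the mass is concentrated in the first coordinate.

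For item (i), I would simply set $\mu = (m,0,\cdots,0)$ in the formula, so that $\sum_i \mu_i = m$ and $\sum_i \bigl((\mu_i - i)^2 - i^2\bigr) = (m-0)^2 - 0^2 = m^2$ (all higher terms vanish since $\mu_i = 0 = i$ for... no, $i \neq 0$ there, so each term is $(0-i)^2 - i^2 = 0$). Thus the numerator becomes $8(mb_2 + m^2) = 8m(b_2+m)$, giving the claimed value. For item (iii), by item (i) we have $s(V_{(m)}) = \tfrac{8}{(b_2+1)(b_2+2)}\,m(b_2+m)$, and since $b_2 \geq 0$ the function $m \mapsto m(b_2+m) = m^2 + b_2 m$ is strictly increasing for $m \geq 0$; hence $m \leq n$ immediately yields $s(V_{(m)}) \leq s(V_{(n)})$. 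These two items should be routine.

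The main obstacle is item (ii). Writing $m = \mu_0 + \cdots + \mu_{r-1} + |\mu_r|$ (so $m = \sum_i \mu_i$ when $\mu_r \geq 0$), I must show $s(V_\mu) \leq s(V_{(m)})$. Since both $s$-values have the same positive denominator $(b_2+1)(b_2+2)$ and the same ``linear in $b_2$'' numerator term $8(\sum_i \mu_i) b_2 = 8m\,b_2$ (using $\sum_i \mu_i = m$), comparing $s(V_\mu)$ with $s(V_{(m)}) = \tfrac{8(mb_2 + m^2)}{(b_2+1)(b_2+2)}$ reduces to the constant terms: I need
\[ \sum_{i=0}^r \bigl((\mu_i - i)^2 - i^2\bigr) \le m^2 = \Bigl(\sum_{i=0}^r \mu_i\Bigr)^2 . \]
Expanding the left side gives $\sum_i \mu_i^2 - 2\sum_i i\mu_i$, and expanding the right gives $\sum_i \mu_i^2 + 2\sum_{i<j}\mu_i\mu_j$, so the inequality becomes $-2\sum_i i\mu_i \le 2\sum_{i<j}\mu_i\mu_j$, i.e. $0 \le \sum_{i<j}\mu_i\mu_j + \sum_i i\mu_i$. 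Since $\mu$ is even and dominant we have $\mu_0 \geq \cdots \geq \mu_{r-1} \geq |\mu_r| \geq 0$ with all $\mu_i$ nonnegative integers (for the $\mu_r \geq 0$ case, which the formula reduces to), both sums on the right are manifestly nonnegative, so the inequality holds.

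The one point requiring care is the sign convention when $b_2$ is even and $\mu_r < 0$: here I must first invoke the last clause of \Cref{thm:generalized_weyl} to replace $\mu$ by $\mu' = (\mu_0,\cdots,\mu_{r-1},-\mu_r)$, which has the same $s$-value and satisfies $\mu'_r \geq 0$ with $\sum_i \mu'_i = \mu_0 + \cdots + \mu_{r-1} + |\mu_r| = m$, after which the computation above applies verbatim. I expect no genuine difficulty beyond bookkeeping the absolute value, since the key inequality reduces to nonnegativity of sums of products of nonnegative integers.
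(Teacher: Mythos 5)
Your proposal is correct. Items (i) and (iii) are handled exactly as in the paper: substitution of $\mu = m\varepsilon_0$ into \Cref{thm:generalized_weyl}, then monotonicity of $m \mapsto m(b_2+m)$ for $m \ge 0$. For item (ii) you take a genuinely different route. The paper reduces the claim to the inequality $A(\mu) \le A(m\varepsilon_0)$ for $A(\mu) = \sum_{i=0}^r (\mu_i - i)^2$ and proves it by an inductive exchange argument: repeatedly apply the local move $(\mu_i,\mu_j) \mapsto (\mu_i+1,\mu_j-1)$, each of which strictly increases $A$ by \eqref{eq:induction} (this step uses the dominance condition $\mu_i \ge \mu_j$), until $\mu$ is transformed into $m\varepsilon_0$. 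You instead expand both sides directly: the required inequality $\sum_{i}\bigl((\mu_i-i)^2 - i^2\bigr) \le \bigl(\sum_i \mu_i\bigr)^2$ becomes, after cancelling $\sum_i \mu_i^2$, the statement $0 \le \sum_{i<j}\mu_i\mu_j + \sum_i i\mu_i$, which is manifest since all $\mu_i \ge 0$ once the case $\mu_r < 0$ has been reduced away (and you handle that reduction the same way the paper does, via the last clause of \Cref{thm:generalized_weyl}, noting $\sum_i \mu'_i = m$). Your argument is more elementary: it avoids the induction entirely and in fact uses only nonnegativity of the coordinates rather than the full dominance ordering $\mu_0 \ge \cdots \ge \mu_{r-1} \ge |\mu_r|$. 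What the paper's exchange argument buys in return is the conceptual picture behind the corollary, namely that $s$ increases monotonically as the weight $\mu$ is deformed step by step toward the extremal weight $m\varepsilon_0$; but as a proof of the stated inequality, your direct expansion is complete and, if anything, cleaner.
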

\begin{proof}
	The first item is immediate from letting $\mu = m \varepsilon_0$ in \Cref{thm:generalized_weyl}. The third item follows from it directly. For the second item, using $s(V_{\mu}) = s(V_{\mu'})$ in \Cref{thm:generalized_weyl}, we may assume $\mu_r \ge 0$. Let us temporarily define a function $A(\mu)$ of a dominant integral weight $\mu$ by
	\[ A(\mu) = \sum_{i = 0}^r (\mu_i - i)^2 .\]
	Again using \Cref{thm:generalized_weyl}, one finds that the second item is equivalent to the inequality $A(\mu) \le A(m \varepsilon_0)$. For it, one first proves an inequality
	\begin{equation} \label{eq:induction}
		(\mu_i - i)^2 + (\mu_j - j)^2 < (\mu_i + 1 - i)^2 + (\mu_j - 1 - j)^2 \qquad \mbox{for } \ 0 \le i < j \le r ,
	\end{equation}
	which easily follows from $\mu_i \ge \mu_j$. The desired $A(\mu) \le A(m \varepsilon_0)$ follows from inductively applying the inequality \eqref{eq:induction} to modify the dominant integral weight $\mu$ until it reaches $m \varepsilon_0$.
\end{proof}

\begin{proof} [Proof of Main Theorem]
	Assume $H^k (X) \neq 0$ for some odd integer $k$. By \Cref{cor:odd_weight_bound}, there exists at least one component $V_{\mu} \subset H^*_{\odd} (X)$ with $\mu_0 + \cdots + \mu_{r-1} + |\mu_r| \ge n - \frac{k}{2} + \frac{r}{2}$. Thus, under the condition \eqref{eq:conj}, we get $r \le k$ and hence $b_2 = (2r \mbox{ or } 2r+1) \le 2k+1$.
	
	Now assume $H^*_{\odd} (X) = 0$. 	Among the irreducible components $V_{\mu}$ of the LLV decomposition \eqref{eq:llv_decomp}, we always have the Verbitsky component, which as a $\mathfrak g$-module is isomorphic to $V_{(n)}$. Thus, if we assume the condition \eqref{eq:conj} holds for $X$, then combining \Cref{cor:s_prop2} with \Cref{prop:s_prop1} gives us
	\[ s(H^* (X)) \le \max \{ s(V_{\mu}) : \mu \mbox{ appearing in \eqref{eq:llv_decomp}} \} = s(V_{(n)}) = \frac{8n (b_2 + n)} {(b_2 + 1)(b_2 + 2)} .\]
	On the other hand, we have Salamon's relation $s(H^* (X)) = \frac{n}{3}$ in \eqref{eq:salamon_new}. We conclude 
	\[ s(H^*(X)) = \frac{n}{3} \le \frac{8n (b_2 + n)} {(b_2 + 1)(b_2 + 2)} ,\]
	giving the desired bound on $b_2$ in Main Theorem.
\end{proof}

\section{Computation of $s(W)$ for irreducible $\mathfrak g$-modules}
In this section, we prove \Cref{thm:generalized_weyl} by using standard representation theoretic methods.

Let us first fix the notation. Here, we simply write $b = b_2(X)$. Let $(V, q)$ be a quadratic space of dimension $b+2$ and $\mathfrak g = \so (V, q)$ be the associated simple Lie algebra of type $\mathrm B_{r+1}$ / $\mathrm D_{r+1}$. We fix a Cartan and a Borel subalgebra of $\mathfrak g$ so that the positive and simple roots are well defined. We also use the following notation:
\begin{itemize}
	\item $\mathfrak W$ is the Weyl group of $\mathfrak g$;
	\item $R_+$ is the set of positive roots of $\mathfrak g$;
	\item For $w \in \mathfrak W$, $\ell (w)$ is the length of $w$. That is, $\ell (w)$ is the minimum length of the decomposition of $w$ into a product of simple reflexions $w = s_{\alpha_1} \cdots s_{\alpha_{\ell}}$ where all $\alpha_i$ are simple roots of $\mathfrak g$;
	\item $\rho$ is the half sum of all the positive roots
	\begin{equation} \label{eq:rho}
		\rho = \frac{1}{2} \sum_{\alpha \in R_+} \alpha .
	\end{equation}
\end{itemize}
Throughout, we consider an irreducible representation $W=V_\mu$ of highest weight $\mu$. 

Our proof is inspired by the proof of Weyl dimension formula (following \cite[\S 8.5]{kirillov}). The Weyl dimension formula is a closed formula computing $\dim V_{\mu}$, which can be derived from the Weyl character formula. 
\begin{theorem} [Weyl character formula] \label{thm:weyl_character_formula}
	The formal character of the irreducible $\mathfrak g$-module $V_{\mu}$ of highest weight $\mu$ can be computed from a formal power series expansion of the rational function
	\[ \ch (V_{\mu}) = \frac{\sum_{w \in \mathfrak W} (-1)^{\ell (w)} e^{w . (\mu + \rho)}}{\prod_{\alpha \in R_+} (e^{\alpha / 2} - e^{- \alpha / 2})} .\]
\end{theorem}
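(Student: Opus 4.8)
The plan is to prove the Weyl character formula by the standard algebraic route, following \cite[\S 8.5]{kirillov}, working inside the group ring $\ZZ[P]$ of the weight lattice $P$ with its formal-exponential basis $\{e^\lambda\}$; the Weyl group $\mathfrak W$ acts by $w(e^\lambda)=e^{w\lambda}$, and $w(\mu+\rho)$ in the statement denotes this ordinary linear action. The formal character is $\ch(V_\mu)=\sum_\lambda(\dim V_\mu^{\lambda})\,e^{\lambda}$. Rather than produce the quotient directly, I would establish the equivalent anti-invariant identity
\[ \ch(V_\mu)\cdot\prod_{\alpha\in R_+}(e^{\alpha/2}-e^{-\alpha/2})\;=\;\sum_{w\in\mathfrak W}(-1)^{\ell(w)}e^{w(\mu+\rho)}, \]
from which the theorem follows upon dividing; the case $\mu=0$ recovers the Weyl denominator identity and will come out of the same argument, so there is no need to assume it in advance.

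First I would record the two structural inputs. The character $\ch(V_\mu)$ is $\mathfrak W$-invariant: for each positive root $\alpha$ the attached $\mathfrak{sl}_2$-triple acts on $V_\mu$ and forces $\dim V_\mu^{\lambda}=\dim V_\mu^{s_\alpha\lambda}$. The Weyl denominator $\delta:=\prod_{\alpha\in R_+}(e^{\alpha/2}-e^{-\alpha/2})$ is anti-invariant, i.e.\ $w(\delta)=(-1)^{\ell(w)}\delta$. Writing $J(f)=\sum_{w}(-1)^{\ell(w)}w(f)$ for the antisymmetrization operator, the elements $J(e^{\lambda+\rho})$ with $\lambda$ dominant form a $\ZZ$-basis of the anti-invariants of $\ZZ[P]$, since $\rho$ is regular and each such element has a unique dominant exponent. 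Hence the product $\ch(V_\mu)\cdot\delta$ is anti-invariant and may be written $\ch(V_\mu)\cdot\delta=\sum_{\lambda}m_\lambda\,J(e^{\lambda+\rho})$; a leading-term analysis in the dominance order identifies the top exponent as $\mu+\rho$ with coefficient $1$, so the sum runs over dominant $\lambda\le\mu$ with $m_\mu=1$. It remains to show $m_\lambda=0$ for $\lambda\neq\mu$.

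The finishing step, which I expect to be the main obstacle, separates the surviving term by a Casimir eigenvalue comparison. Let $\langle\cdot,\cdot\rangle$ be the $\mathfrak W$-invariant form on the weight space and let $D$ be the operator on $\ZZ[P]$ with $D(e^\nu)=\langle\nu,\nu\rangle\,e^\nu$. The Casimir element acts on the irreducible $V_\mu$ by the scalar $\langle\mu+\rho,\mu+\rho\rangle-\langle\rho,\rho\rangle$; the technical heart is to translate this into the statement that $\delta\cdot\ch(V_\mu)$ is a $D$-eigenvector with eigenvalue $\langle\mu+\rho,\mu+\rho\rangle$, using the classical fact that conjugating the (first- plus second-order) operator encoding the Casimir action on characters by the denominator $\delta$ reduces it to $D-\langle\rho,\rho\rangle$ on the anti-invariants. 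Granting this, since $D\,J(e^{\lambda+\rho})=\langle\lambda+\rho,\lambda+\rho\rangle\,J(e^{\lambda+\rho})$, comparing eigenvalues forces $m_\lambda=0$ whenever $\langle\lambda+\rho,\lambda+\rho\rangle\neq\langle\mu+\rho,\mu+\rho\rangle$. The remaining terms are eliminated by the strict inequality $\langle\lambda+\rho,\lambda+\rho\rangle<\langle\mu+\rho,\mu+\rho\rangle$ for dominant $\lambda<\mu$, which follows from $\langle(\mu+\rho)+(\lambda+\rho),\mu-\lambda\rangle>0$ since $\mu+\rho$ is regular and $\mu-\lambda$ is a nonzero nonnegative combination of positive roots. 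This isolates $\lambda=\mu$ and yields the displayed identity. Because $\mathfrak g=\so(b+2,\CC)$ is of type $\mathrm B_{r+1}$ or $\mathrm D_{r+1}$, no case-specific input enters: the argument is the general semisimple one, and I would simply specialize it to our Lie algebra.
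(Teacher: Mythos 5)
The paper does not actually prove this statement: the Weyl character formula is quoted as a classical result, with \cite[\S 8.5]{kirillov} as the reference, and only its consequences (the $q$-specialization \eqref{eq:weyl_character} and the dimension formula) are developed in the text. Your sketch is precisely the standard proof from that reference---establish the anti-invariant identity $\ch(V_\mu)\cdot\delta=\sum_{w}(-1)^{\ell(w)}e^{w(\mu+\rho)}$ by expanding in the basis $J(e^{\lambda+\rho})$, normalize the leading term, and eliminate all $\lambda\neq\mu$ via the Casimir eigenvalue comparison together with the strict inequality $\langle\lambda+\rho,\lambda+\rho\rangle<\langle\mu+\rho,\mu+\rho\rangle$ for dominant $\lambda<\mu$---and it is correct in outline; the single step you explicitly grant (that conjugating the operator encoding the Casimir action on characters by $\delta$ yields $D-\langle\rho,\rho\rangle$ on anti-invariants) is exactly the classical radial-part computation carried out in the cited source, so your attempt matches the intended provenance of the theorem rather than diverging from anything in the paper.
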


Due to its importance to our proof of \Cref{thm:generalized_weyl}, let us review first the proof of Weyl dimension formula. To start, recall the $\mathfrak g$-module $V_{\mu}$ has a weight decomposition $V_{\mu} = \bigoplus_{\theta \in \Lambda} V_{\mu} (\theta)$, where $\Lambda$ is the weight lattice of $\mathfrak g$, $\theta$ runs through the weights of $\mathfrak g$, and $V_{\mu} (\theta)$ indicates the weight $\theta$ subspace of $V_{\mu}$. The formal character of $V_{\mu}$ is an element in the group algebra $\QQ[\Lambda]$ encoding dimensions of the weight subspaces $V_{\mu} (\theta)$:
\[ \ch (V_{\mu}) = \sum_{\theta} \dim V_{\mu} (\theta) \cdot e^{\theta} \quad \in \QQ[\Lambda] .\]

Let us introduce a ring homomorphism ``projection to $\rho$-direction''
\[ \mathrm{pr}_{\rho} : \QQ[\Lambda] \to \QQ[q^{\pm 1}] , \qquad e^{\theta} \mapsto q^{4(\rho, \theta)} ,\]
where $(,)$ is the Killing form of $\mathfrak g$ and $\rho$ is defined in \eqref{eq:rho}. Set $f(q) = f_{\mu} (q) \in \QQ[q^{\pm 1}]$ to be the image of the formal character $\ch (V_{\mu})$ by the homomorphism $\mathrm{pr}_{\rho}$:
\begin{equation} \label{eq:f}
	f(q) = \mathrm{pr}_{\rho} (\ch (V_{\mu})) = \sum_{\theta} \dim V_{\mu}(\theta) \: q^{4(\rho, \theta)} .
\end{equation}
Since $\dim V_{\mu} = \sum_{\theta} \dim V_{\mu} (\theta)$, the dimension of $V_{\mu}$ can be recovered from $f(q)$ by
\begin{equation} \label{eq:dimension}
	\dim V_{\mu} = f(1) .
\end{equation}
On the other hand, if we apply $\mathrm{pr}_{\rho}$ to the Weyl character formula above, then using Weyl denominator identity  (e.g., \cite[Thm 8.39]{kirillov}), the Weyl character formula is translated into
\begin{equation} \label{eq:weyl_character}
	f(q) = \prod_{\alpha \in R_+} \frac{q^{2(\mu + \rho, \alpha)} - q^{-2(\mu + \rho, \alpha)}} {q^{2(\rho, \alpha)} - q^{-2(\rho, \alpha)}} .
\end{equation}
The Weyl dimension formula is obtained by computing $f(1) = \lim_{q \to 1} f(q)$ with the aid of \eqref{eq:weyl_character}.

\medskip

Now let us begin the proof of \Cref{thm:generalized_weyl}. First, notice that for irreducible modules $V_{\mu}$, we can ignore the sign terms $(-1)^k$ in the definition of $s(V_{\mu})$ (i.e., $V_\mu$ is either even or odd). Thus, we have
\[ s(V_{\mu}) = \frac{\sum_k k^2 \dim (V_{\mu})_k}{\dim V_{\mu}} .\]
The following lemma expresses $s(V_{\mu})$ in terms of $f(q)$, imitating \eqref{eq:dimension} above.

\begin{lemma} \label{lem:step1}
	Let $f = f(q)$ be as in \eqref{eq:f}. Then $s(V_{\mu}) = \frac{6}{b(b+1)(b+2)} (\log f)'' (1)$.
\end{lemma}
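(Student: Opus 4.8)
The plan is to translate everything into the weight-space description of $V_\mu$ and then exploit the Weyl symmetry of the weight multiplicities. Recall from \Cref{sec:backgrounds} that $h=\varepsilon_0^\vee$ acts on the weight space $V_\mu(\theta)$ of a weight $\theta=\sum_i\theta_i\varepsilon_i$ by the scalar $\langle\varepsilon_0^\vee,\theta\rangle=2\theta_0$. Since $V_\mu$ is either even or odd, the signs $(-1)^k$ drop out of the definition of $s(V_\mu)$, and grouping the weights according to their $h$-eigenvalue $k=2\theta_0$ gives
\[ s(V_\mu)=\frac{\sum_k k^2\dim(V_\mu)_k}{\dim V_\mu}=\frac{4\sum_\theta \theta_0^2\,\dim V_\mu(\theta)}{\dim V_\mu}. \]
Throughout I will use the normalization $(\varepsilon_i,\varepsilon_j)=\delta_{ij}$ of the form $(\,,)$ entering $\mathrm{pr}_\rho$, under which $\theta_0=(\varepsilon_0,\theta)$ and the factor $4$ in $e^\theta\mapsto q^{4(\rho,\theta)}$ is the one that makes the constants below come out correctly.

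Next I would compute $(\log f)''(1)$ directly from the series $f(q)=\sum_\theta\dim V_\mu(\theta)\,q^{4(\rho,\theta)}$ in \eqref{eq:f}. Writing $c_\theta=4(\rho,\theta)$, one has $f(1)=\dim V_\mu$ by \eqref{eq:dimension}, while $f'(1)=\sum_\theta c_\theta\dim V_\mu(\theta)=4\bigl(\rho,\sum_\theta\dim V_\mu(\theta)\,\theta\bigr)$. The sum of weights counted with multiplicity, $\sum_\theta\dim V_\mu(\theta)\,\theta$, is $\mathfrak W$-invariant because the weight multiplicities are, and hence vanishes (a nontrivial irreducible reflection representation has no nonzero fixed vector); thus $f'(1)=0$. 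Using $(\log f)''=f''/f-(f'/f)^2$ together with the same vanishing of the linear term in $f''(1)=\sum_\theta c_\theta(c_\theta-1)\dim V_\mu(\theta)$, I obtain
\[ (\log f)''(1)=\frac{f''(1)}{\dim V_\mu}=\frac{16\sum_\theta(\rho,\theta)^2\dim V_\mu(\theta)}{\dim V_\mu}. \]

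The crux is then to compare $\sum_\theta\theta_0^2\dim V_\mu(\theta)$ with $\sum_\theta(\rho,\theta)^2\dim V_\mu(\theta)$. I would consider the quadratic form $\Phi(v)=\sum_\theta(v,\theta)^2\dim V_\mu(\theta)$ in the variable $v$. Since each $w\in\mathfrak W$ is orthogonal and preserves the weight multiplicities, reindexing $\theta\mapsto w\theta$ shows $\Phi(wv)=\Phi(v)$, so $\Phi$ is $\mathfrak W$-invariant. As the reflection representation of $\mathfrak g$ (of type $\mathrm B_{r+1}$ or $\mathrm D_{r+1}$, which is irreducible in the relevant range $b\ge 3$) carries a unique invariant quadratic form up to scale, we must have $\Phi(v)=\kappa\,|v|^2$ for some constant $\kappa=\kappa(\mu)$. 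Evaluating at $v=\varepsilon_0$ (with $|\varepsilon_0|^2=1$) and at $v=\rho$ gives $\sum_\theta\theta_0^2\dim V_\mu(\theta)=\kappa$ and $\sum_\theta(\rho,\theta)^2\dim V_\mu(\theta)=\kappa|\rho|^2$, whence
\[ \sum_\theta\theta_0^2\dim V_\mu(\theta)=\frac{1}{|\rho|^2}\sum_\theta(\rho,\theta)^2\dim V_\mu(\theta). \]

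Finally I would assemble these. Combining the three displays yields $s(V_\mu)=\frac{(\log f)''(1)}{4|\rho|^2}$, so it remains to compute $|\rho|^2$. Using $\rho=(r,r-1,\dots,1,0)$ in type $\mathrm D_{r+1}$ (i.e.\ $b=2r$) and $\rho=(r+\tfrac12,\dots,\tfrac12)$ in type $\mathrm B_{r+1}$ (i.e.\ $b=2r+1$), a short sum-of-squares computation gives in both cases $|\rho|^2=\frac{b(b+1)(b+2)}{24}$, hence $\frac{1}{4|\rho|^2}=\frac{6}{b(b+1)(b+2)}$, which is exactly the claimed constant. I expect the main obstacle to be the crux step: making the Weyl-symmetrization precise and, relatedly, pinning down the normalization of $(\,,)$ so that the bookkeeping constants (the $4$ from $\mathrm{pr}_\rho$, the $4$ from $k=2\theta_0$, and the $24$ in $|\rho|^2$) combine to give exactly $\frac{6}{b(b+1)(b+2)}$ rather than merely the correct answer up to an overall scalar.
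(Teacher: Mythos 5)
Your proof is correct, and its first half is identical to the paper's: both compute $f'(1)=0$ and $f''(1)=16\sum_\theta(\rho,\theta)^2\dim V_\mu(\theta)$, reducing the lemma to comparing $\sum_\theta(\rho,\theta)^2\dim V_\mu(\theta)$ with $\sum_\theta\theta_0^2\dim V_\mu(\theta)$. Where you genuinely diverge is in this comparison, which is the crux. The paper expands $(\rho,\theta)^2$ in coordinates using the explicit $\rho$ of \eqref{eq:rho_explicit}, invokes two consequences of Weyl symmetry (that $\sum_\theta\theta_i^2\dim V_\mu(\theta)$ is independent of $i$, and that the cross terms $\sum_\theta\theta_i\theta_j\dim V_\mu(\theta)$ vanish for $i\neq j$), and then carries out separate sum-of-squares computations for $b$ even and $b$ odd. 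You instead package the Weyl symmetry into the single statement that $\Phi(v)=\sum_\theta(v,\theta)^2\dim V_\mu(\theta)$ is a $\mathfrak W$-invariant quadratic form and apply Schur's lemma (uniqueness of the invariant quadratic form on the irreducible reflection representation) to conclude $\Phi=\kappa\,|\cdot|^2$; evaluating at $\varepsilon_0$ and at $\rho$ then makes the constant emerge uniformly as $4|\rho|^2=\tfrac{b(b+1)(b+2)}{6}$ for both types $\mathrm B_{r+1}$ and $\mathrm D_{r+1}$. This is more conceptual, treats the two parities of $b$ on an equal footing, and explains where the constant comes from. Two further remarks. First, your justification of $f'(1)=0$ (the multiplicity-weighted sum of weights is $\mathfrak W$-fixed, hence zero) is in fact more robust than the paper's appeal to $\dim V_\mu(\theta)=\dim V_\mu(-\theta)$, which fails for non-self-dual modules (e.g., half-spin representations of $\mathrm D_{r+1}$ with $r+1$ odd, where $\mu_r\neq 0$), although the paper's conclusion is unaffected. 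Second, the price of your route is the irreducibility hypothesis: the reflection representation of the Weyl group of $\mathrm D_2$ is reducible, so your argument genuinely requires $b\ge 3$ --- a restriction you correctly flagged and which is harmless here, since $b_2\ge 3$ for every compact hyper-K\"ahler manifold; the paper's coordinate computation happens to cover $b=2$ as well, because the potentially problematic cross term there appears with coefficient $(r-i)(r-j)=0$ when $r=1$.
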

\begin{proof}
	Consider the derivates of $f$
	\[ f'(q) = \sum_{\theta} 4(\rho, \theta) \dim V_{\mu} (\theta) \: q^{4(\rho, \theta) - 1}, \qquad f'' (q) = \sum_{\theta} 4(\rho, \theta) (4(\rho, \theta) - 1) \dim V_{\mu} (\theta) \: q^{4(\rho, \theta) - 2} .\]
	The Weyl symmetry gives us $\dim V_{\mu} (\theta) = \dim V_{\mu} (-\theta)$. From it, we obtain $f'(1) = 0$ and
	\[ f'' (1) = 16 \sum_{\theta} (\rho, \theta)^2 \dim V_{\mu} (\theta) .\]
	Let us now specialize the discussion to $\mathfrak g = \so (V, q)$ and use the precise value of $\rho$. For special orthogonal Lie algebras, one can compute all the positive roots explicitly in terms of our preferred basis $\varepsilon_i$ and hence obtain the half sum of all the positive roots
	\begin{align} \label{eq:rho_explicit}
		\rho = \begin{cases}
			r \varepsilon_0 + (r-1) \varepsilon_1 + \cdots + \varepsilon_{r-1} & \mbox{when } \ b = 2r \mbox{ is even} \\
			(r + \tfrac{1}{2}) \varepsilon_0 + (r - \tfrac{1}{2}) \varepsilon_1 + \cdots + \tfrac{1}{2} \varepsilon_r \quad & \mbox{when } \ b = 2r + 1 \mbox{ is odd}
		\end{cases} .
	\end{align}
	
	Assume $b = 2r$ is even. Letting $\theta = \sum_{i=0}^r \theta_i \varepsilon_i$, we have $(\rho, \theta) = \sum_{i=0}^r (r-i)\theta_i$. This gives us
	\[ f''(1) = 16 \sum_{\theta} \left( \sum_{i=0}^r (r-i)^2 \theta_i^2 + 2 \sum_{0 \le i < j \le r} (r-i)(r-j) \theta_i \theta_j \right) \dim V_{\mu} (\theta) .\]
	Again by Weyl symmetry, we have $\dim V_{\mu} (\theta) = \dim V_{\mu} (w.\theta)$ for any $w \in \mathfrak W$. Note that the Weyl group $\mathfrak W$ in this case is isomorphic to an order $2$ subgroup of $\mathfrak S_{r+1} \ltimes (\ZZ/2)^{\times (r+1)}$, consisting of the elements with even number of $1 \in \ZZ/2$. The symmetric group part $\mathfrak S_{r+1}$ acts on a weight $\theta = (\theta_0, \cdots, \theta_r)$ by permuting coordinates, and $(\ZZ/2)^{\times (r+1)}$ part acts on it by flipping the signs of $\theta_i$'s. With these symmetries in mind, one deduces
	\begin{itemize}
		\item [(1)] $\sum_{\theta} \theta_i^2 \dim V_{\mu} (\theta) = \sum_{\theta} \theta_j^2 \dim V_{\mu} (\theta)$; and
		\item [(2)] $\sum_{\theta} \theta_i \theta_j \dim V_{\mu} (\theta) = 0$ for $i \neq j$.
	\end{itemize}
	This finally leads us to the identity
	\begin{align*}
		f''(1) &= 16 \cdot \left( r^2 + (r-1)^2 + \cdots + 1^2 \right) \sum_{\theta} \theta_0^2 \dim V_{\mu} (\theta) \\
		&= 16 \cdot \frac{r(r+1)(2r+1)}{6} \sum_k \left( \frac{k}{2} \right)^2 \dim (V_{\mu})_k
		= \frac{b(b+1)(b+2)}{6} \sum_k k^2 \dim (V_{\mu})_k .
	\end{align*}
	Combining it with $f(1) = \dim V_{\mu}$ and $f'(1) = 0$, we have $(\log f)''(1) = \frac{f''(1)}{f(1)} = \frac{b(b+1)(b+2)}{6} s(V_{\mu})$, as claimed.
	
	Next, assume $b = 2r+1$ is odd. Similar argument gives us the computation
	\[ f''(1) = \frac{(r+1)(2r+1)(2r+3)}{3} \sum_k k^2 \dim (V_{\mu})_k = \frac{b(b+1)(b+2)}{6} \sum_k k^2 \dim (V_{\mu})_k .\]
	Hence the same result follows, regardless of the parity of $b$.
\end{proof}

The next step is to use the Weyl character formula \eqref{eq:weyl_character} and compute the value $(\log f)''(1)$.

\begin{lemma} \label{lem:step2}
	Let $f = f(q)$ be as in \eqref{eq:f}. If $\mu_r \ge 0$, then $(\log f)''(1) = \frac{4}{3} b \left[ \left( \sum_{i=0}^r \mu_i \right) b + \left( \sum_{i=0}^r \mu_i^2 - 2i\mu_i \right) \right]$.
\end{lemma}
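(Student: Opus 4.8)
The plan is to start from the product expression \eqref{eq:weyl_character} for $f(q)$ and use that $\log$ turns the product over positive roots into a sum, so that $(\log f)''(1)$ splits into one local contribution per positive root. Writing $a_\alpha = 2(\mu+\rho,\alpha)$ and $b_\alpha = 2(\rho,\alpha)$, each factor of $f$ is $\frac{q^{a_\alpha}-q^{-a_\alpha}}{q^{b_\alpha}-q^{-b_\alpha}}$, hence $\log f = \sum_{\alpha \in R_+}\bigl[\log(q^{a_\alpha}-q^{-a_\alpha}) - \log(q^{b_\alpha}-q^{-b_\alpha})\bigr]$. The cleanest way to read off the second derivative at $q=1$ is to substitute $q = e^t$, which turns each factor into $\sinh(a_\alpha t)/\sinh(b_\alpha t)$ and sends $q=1$ to $t=0$. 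Setting $F = \log f$ and $\tilde F(t) = F(e^t)$, the chain rule gives $\tilde F''(0) = F''(1) + F'(1)$; since $f'(1)=0$ follows from the Weyl symmetry $\dim V_\mu(\theta) = \dim V_\mu(-\theta)$ already exploited in \Cref{lem:step1}, we have $F'(1) = 0$ and therefore $(\log f)''(1) = \tilde F''(0)$. Taylor-expanding $\log \sinh(ct) = \log c + \log t + \tfrac{c^2 t^2}{6} + O(t^4)$, the $\log t$ singularities cancel between numerator and denominator of each factor, leaving the per-root contribution $\left.\frac{d^2}{dt^2}\right|_{t=0}\log\frac{\sinh(a_\alpha t)}{\sinh(b_\alpha t)} = \frac{a_\alpha^2 - b_\alpha^2}{3}$. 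This depends only on $a_\alpha^2, b_\alpha^2$, so signs are irrelevant. Summing and using $a_\alpha^2 - b_\alpha^2 = 4\bigl[(\mu,\alpha)^2 + 2(\mu,\alpha)(\rho,\alpha)\bigr]$ gives
\[ (\log f)''(1) = \frac{4}{3}\sum_{\alpha \in R_+}\bigl[(\mu,\alpha)^2 + 2(\mu,\alpha)(\rho,\alpha)\bigr]. \]

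The remaining and most laborious step is to evaluate the two root sums $S_1 = \sum_{\alpha \in R_+}(\mu,\alpha)^2$ and $S_2 = \sum_{\alpha \in R_+}(\mu,\alpha)(\rho,\alpha)$ explicitly, in the preferred orthonormal basis $(\varepsilon_i,\varepsilon_j)=\delta_{ij}$, using the positive roots $\varepsilon_i \pm \varepsilon_j$ for $0 \le i < j \le r$, the extra short roots $\varepsilon_i$ present only in the odd (type $\mathrm B_{r+1}$) case, and the explicit $\rho$ from \eqref{eq:rho_explicit}. The key simplifications are $(\mu_i-\mu_j)^2 + (\mu_i+\mu_j)^2 = 2(\mu_i^2+\mu_j^2)$ and $(\mu_i-\mu_j)(\rho_i-\rho_j) + (\mu_i+\mu_j)(\rho_i+\rho_j) = 2(\mu_i\rho_i + \mu_j\rho_j)$, after which a counting argument (each index $k$ lies in exactly $r$ of the pairs $i<j$) collapses the double sums to $r\sum_k \mu_k^2$ and $r\sum_k \mu_k \rho_k$. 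Adding the short-root terms in type $\mathrm B$, I expect $S_1 = b\sum_i \mu_i^2$ and $S_2 = \tfrac{b^2}{2}\sum_i \mu_i - b\sum_i i\mu_i$ in both parities; the point is that the type-$\mathrm B$ extra $\sum_i \mu_i^2$ and the shift $\rho_i = r+\tfrac12 - i$ conspire with $b = 2r+1$ to reproduce exactly the closed forms obtained from $b=2r$ and $\rho_i = r-i$ in type $\mathrm D$. Substituting into $\frac{4}{3}(S_1 + 2S_2) = \frac{4}{3}b\bigl[b\sum_i\mu_i + \sum_i(\mu_i^2 - 2i\mu_i)\bigr]$ then yields the claimed formula.

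The main obstacle is bookkeeping rather than conceptual: tracking the Weyl-group structure, the difference in the positive-root systems and in $\rho$ between types $\mathrm B$ and $\mathrm D$, and checking that the two cases unify into a single expression in $b = b_2$. The hypothesis $\mu_r \ge 0$ plays only a minor role: for type $\mathrm B$ it is automatic, while for type $\mathrm D$ the $\mu_r$-dependence of $\frac{4}{3}b\bigl[b\mu_r + \mu_r^2 - 2r\mu_r\bigr]$ reduces to $\frac{4}{3}b\,\mu_r^2$ because $b = 2r$ there, so the formula is in fact even in $\mu_r$; the assumption simply aligns this lemma with \Cref{thm:generalized_weyl}, where the case $\mu_r < 0$ is disposed of separately by the reflection symmetry $s(V_\mu) = s(V_{\mu'})$.
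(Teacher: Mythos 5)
Your proof is correct and follows essentially the same route as the paper's: logarithmic differentiation of the $\rho$-specialized Weyl character formula \eqref{eq:weyl_character}, extraction of the second-order Taylor coefficient of each positive-root factor, and explicit evaluation of the resulting root sums in types $\mathrm{B}_{r+1}$ and $\mathrm{D}_{r+1}$. The only cosmetic differences are your substitution $q = e^{t}$ with $\sinh$ factors (the paper instead regularizes each factor by $q-1$ and expands directly at $q=1$, avoiding the chain-rule step that requires $f'(1)=0$) and your splitting of the root sum into $S_1$ and $S_2$ rather than expanding $(\mu+\rho,\alpha)^2 - (\rho,\alpha)^2$ as a whole; both routes pass through the identical intermediate identity $(\log f)''(1) = \tfrac{4}{3}\sum_{\alpha\in R_+}\bigl[(\mu+\rho,\alpha)^2 - (\rho,\alpha)^2\bigr]$.
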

\begin{proof}
	From the $q$-polynomial version of the Weyl character formula \eqref{eq:weyl_character}, we derive
	\[ \log f(q) = \sum_{\alpha \in R_+} \log \left( \frac{q^{2(\mu + \rho, \alpha)} - q^{-2(\mu + \rho, \alpha)}} {q - 1} \right) - \log \left( \frac{q^{2(\rho, \alpha)} - q^{-2(\rho, \alpha)}} {q - 1} \right) .\]
	Here $q-1$ on the denominators are inserted to make each log terms well-defined in the neighborhood of $q=1$. Notice that $(\log f)''(1)$ is twice the coefficient of the term $(q-1)^2$ in the Taylor series of $\log f$. For a general \emph{positive integer} $a$, the Taylor series expansion of $\log \left( \frac{q^a - q^{-a}}{q-1} \right)$ at $q = 1$ is
	\[ \log \left( \frac{q^a - q^{-a}} {q-1} \right) = \log (2a) - \frac{1}{2} (q-1) + \frac{1}{24} (4a^2 + 5) (q-1)^2 + \cdots ,\]
	which has a degree $2$ coefficient $\frac{1}{24} (4a^2 + 5)$. Since $2(\mu+\rho, \alpha)$ and $2(\rho, \alpha)$ are both positive integers for any $\alpha \in R_+$ (N.B. Here we used the fact $\mu_r \ge 0$), we conclude
	\begin{align*}
		(\log f)''(1) &= 2 \sum_{\alpha \in R_+} \frac{1}{24} \big( 16 (\mu + \rho, \alpha)^2 + 5 \big) - \frac{1}{24} \big( 16 (\rho, \alpha)^2 + 5 \big) \\
		&= \frac{4}{3} \sum_{\alpha \in R_+} (\mu + \rho, \alpha)^2 - (\rho, \alpha)^2 .
	\end{align*}
	
	Recalling \eqref{eq:rho_explicit}, let us get into an explicit computation for $\mathfrak g = \so (V, q)$. Assume $b = 2r$ is even. The positive roots are $R_+ = \{ \varepsilon_i \pm \varepsilon_j : 0 \le i < j \le r \}$. We get
	\begin{align*}
		&\sum_{\alpha \in R_+} (\mu + \rho, \alpha)^2 - (\rho, \alpha)^2 \\
		=& \: \sum_{0 \le i < j \le r} (\mu_i - \mu_j)^2 + 2 (\mu_i - \mu_j) (j-i) + (\mu_i + \mu_j)^2 + 2 (\mu_i + \mu_j) (2r - i - j) \\
		=& \: \sum_{0 \le i < j \le r} \left[ 2(\mu_i^2 + \mu_j^2) + 4r (\mu_i + \mu_j) - (i\mu_i + j\mu_j) \right] \\
		=& \: 2r \left[ 2r \sum_{i=0}^r \mu_i + \sum_{i=0}^r (\mu_i^2 - 2i \mu_i) \right]
		= b \left[ \left( \sum_{i=0}^r \mu_i \right) b + \left( \sum_{i=0}^r \mu_i^2 - 2i \mu_i \right) \right] .
	\end{align*}
	This proves the result in this case.
	
	Similarly for $b = 2r+1$ odd, the positive roots are $R_+ = \{ \varepsilon_i : 0 \le i \le r \} \cup \{ \varepsilon_i \pm \varepsilon_j : 0 \le i < j \le r \}$, giving:
	\begin{align*}
		&\sum_{\alpha \in R_+} (\mu + \rho, \alpha)^2 - (\rho, \alpha)^2 \\
		=& \: \sum_{0 \le i < j \le r} (\mu_i - \mu_j)^2 + 2 (\mu_i - \mu_j) (j-i) + (\mu_i + \mu_j)^2 + 2 (\mu_i + \mu_j) (2r + 1 - i - j) \\
		& \: + \sum_{i=0}^r \mu_i^2 + 2 \mu_i (r + \tfrac{1}{2} - i) \\
		=& \: \sum_{0 \le i < j \le r} \left[ 2(\mu_i^2 + \mu_j^2) + (4r + 2) (\mu_i + \mu_j) - (i\mu_i + j\mu_j) \right] + \sum_{i=0}^r \mu_i^2 + (2r+1) \mu_i - 2i\mu_i \\
		=& \: (2r + 1) \left[ (2r + 1) \sum_{i=0}^r \mu_i + \sum_{i=0}^r (\mu_i^2 - 2i \mu_i) \right]
		= b \left[ \left( \sum_{i=0}^r \mu_i \right) b + \left( \sum_{i=0}^r \mu_i^2 - 2i \mu_i \right) \right] .
	\end{align*}
	This completes the proof of the lemma.
\end{proof}

\begin{proof}[Proof of \Cref{thm:generalized_weyl}]
	Combining \Cref{lem:step1} and \ref{lem:step2}, the theorem follows for the case $\mu_r \ge 0$. Now assume $b_2 = 2r$ is even and $\mu_r < 0$. In this case, $\rho$ does not have the $\varepsilon_r$-coordinate by \eqref{eq:rho_explicit}. Hence, the Weyl character formula (\Cref{thm:weyl_character_formula}) implies that the weights associated to $V_{\mu}$ and $V_{\mu'}$ are bijective via the action $(\theta_0, \cdots, \theta_{r-1}, \theta_r) \mapsto (\theta_0, \cdots, \theta_{r-1}, -\theta_r)$. By definition, the constant $s(W)$ captures only the $h$-eigenspaces, i.e., only the $\varepsilon_0$-coordinates of the weights associated to $W$. This means $s(V_{\mu}) = s(V_{\mu'})$.
\end{proof}

\bibliographystyle{amsalpha}
\bibliography{BettiHK.bib}
\end{document}